\documentclass[proceedings,submission%
%% if you want to use pdftex and pdflatex doesn't do well for you,
%% uncomment the following line
%%,pdftex%
%% if you have difficulties with hyperref uncomment the following line
%%,nohyperref%
%% if you have difficulties with fonts uncomment the following line
%%,notimes%
]{dmtcs}

% DON'T LOAD ANY STYLES THAT CHANGE THE PAGE LAYOUT
% AND DON'T CHANGE THE PAGE LAYOUT BY HAND, EITHER.

\usepackage[latin1]{inputenc}
\usepackage{subfigure}

% graphicx is now loaded automatically no need to put this in here anymore.
%
%\usepackage{graphicx}

% just comment this out if you don't have natbib
% \usepackage[round]{natbib}
\usepackage[numbers,round]{natbib}

\usepackage{amsmath,amssymb,theorem}

%%%%%%%%%% Start TeXmacs macros
\newcommand{\assign}{:=}
\newcommand{\bignone}{}
\newcommand{\nin}{\not\in}
\newcommand{\tmop}[1]{\ensuremath{\operatorname{#1}}}
\newcommand{\tmtextit}[1]{{\itshape{#1}}}
%%%%%%%%%% End TeXmacs macros

\newtheorem{theorem}{Theorem}
\newtheorem{lemma}[theorem]{Lemma}

{\theorembodyfont{\rmfamily}
\newtheorem{remark}[theorem]{Remark}
\newtheorem{example}[theorem]{Example}}

\renewcommand{\ge}{\geqslant}
\renewcommand{\le}{\leqslant}

\newcommand{\qn}[2]{\ensuremath{\left[ {#1} \right]_{#2}}}
\newcommand{\qbinom}[3]{\ensuremath{\binom{#1}{#2}_{#3}}}
\newcommand{\modulo}[1]{\hspace{1em} \tmop{mod} {#1}}

\title{A $q$-analog of Ljunggren's binomial congruence}

\author{Armin Straub\addressmark{1}\thanks{Partially supported by grant NSF-DMS 0713836.}}

\address{\addressmark{3}Tulane University, New Orleans, USA. Email: \texttt{astraub@tulane.edu}}

\keywords{q-analogs, binomial coefficients, binomial congruence}

\begin{document}
\maketitle

\begin{abstract}
\paragraph{Abstract.}
  We prove a $q$-analog of a classical binomial congruence due to Ljunggren
  which states that
  \[ \binom{a p}{b p} \equiv \binom{a}{b} \]
  modulo $p^3$ for primes $p\ge5$.  This congruence subsumes and builds on
  earlier congruences by Babbage, Wolstenholme and Glaisher for which we recall
  existing $q$-analogs. Our congruence generalizes an earlier result of Clark.

\paragraph{R\'esum\'e.}
  to be added

%The Spanish abstract is optional
% \paragraph{Resumen.}

\end{abstract}

\section{Introduction and notation}

Recently, $q$-analogs of classical congruences have been studied by several
authors including {\cite{clark-qbin95}}, {\cite{andrews-qcong99}},
{\cite{shipan-qwolst07}}, {\cite{pan-qlehmer07}}, {\cite{chapman-qwilson08}},
{\cite{dilcher-qharm08}}. Here, we consider the classical congruence
\begin{equation}\label{eq:classical}
  \binom{a p}{b p} \equiv \binom{a}{b} \modulo{p^3}
\end{equation}
which holds true for primes $p \geqslant 5$. This also appears as Problem 1.6
(d) in {\cite{stanley-ec1}}. Congruence (\ref{eq:classical}) was proved in
1952 by Ljunggren, see {\cite{granville-bin97}}, and subsequently generalized
by Jacobsthal, see Remark \ref{rk-jacobsthal}.

Let $[n]_q \assign 1 + q + \ldots q^{n - 1}$, $[n]_q ! \assign [n]_q [n - 1]_q
\cdots [1]_q$ and
\[ \qbinom{n}{k}{q} \assign \frac{[n]_q !}{[k]_q ! [n - k]_q !} \]
denote the usual $q$-analogs of numbers, factorials and binomial coefficients
respectively. Observe that $\qn{n}{1}=n$ so that in the case $q=1$ we recover
the usual factorials and binomial coefficients as well. Also, recall that the
$q$-binomial coefficients are polynomials in $q$ with nonnegative integer
coefficients. An introduction to these $q$-analogs can be found in
\cite{stanley-ec1}.

We establish the following $q$-analog of \eqref{eq:classical}:
\begin{theorem}\label{thm:q}
  For primes $p\ge5$ and nonnegative integers $a, b$,
  \begin{equation}\label{eq:qclassical}
    \qbinom{ap}{bp}{q} \equiv \qbinom{a}{b}{q^{p^2}} - \binom{a}{b + 1} \binom{b + 1}{2}
    \frac{p^2 - 1}{12} (q^p - 1)^2 \modulo{\qn{p}{q}^3} .
  \end{equation}
\end{theorem}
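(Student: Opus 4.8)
The plan is to expand $\qbinom{ap}{bp}{q}$ by an iterated $q$-Vandermonde (a block decomposition), reduce modulo $\qn{p}{q}^3$ to the few terms that survive, and evaluate those. The starting point is the identity
\[
  \qbinom{ap}{bp}{q}\;=\;\sum_{\substack{0\le k_1,\dots,k_a\le p\\ k_1+\cdots+k_a=bp}} q^{\,\sum_{1\le u<u'\le a}k_{u'}(p-k_u)}\prod_{u=1}^{a}\qbinom{p}{k_u}{q},
\]
obtained from $\qbinom{n}{k}{q}=\sum_{|S|=k}q^{\mathrm{inv}(S)}$ with $\mathrm{inv}(S)=\#\{(i,j):i\in S,\ j\notin S,\ i>j\}$, by splitting $\{1,\dots,ap\}$ into $a$ consecutive blocks of size $p$ and separating within-block from between-block inversions. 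For $1\le k\le p-1$ one has $\qbinom{p}{k}{q}=\tfrac{\qn{p}{q}}{\qn{k}{q}}\qbinom{p-1}{k-1}{q}$ with $\qn{k}{q}$ a unit modulo $\qn{p}{q}$, so $\qn{p}{q}$ divides $\qbinom{p}{k}{q}$; hence a summand in which exactly $m$ of the $k_u$ lie strictly between $0$ and $p$ is divisible by $\qn{p}{q}^{m}$. Since $\sum k_u=bp\equiv0\pmod p$ rules out $m=1$, only the summands with all $k_u\in\{0,p\}$ and those with exactly two $k_u$ in $\{1,\dots,p-1\}$ matter modulo $\qn{p}{q}^{3}$.

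In the all-$\{0,p\}$ summands exactly $b$ of the $k_u$ equal $p$, and $\sum_{u<u'}k_{u'}(p-k_u)$ equals $p^{2}$ times the number of inversions of the resulting $0/1$-word; these terms therefore add up to exactly $\qbinom{a}{b}{q^{p^2}}$, which is where the $q^{p^2}$ in \eqref{eq:qclassical} comes from. In a two-middle summand the two distinguished blocks $i<j$ carry values $s$ and $p-s$ for some $1\le s\le p-1$ (their sum is a multiple of $p$ lying in $[2,2p-2]$, hence equals $p$), and among the other $a-2$ blocks exactly $b-1$ are full; the number of such configurations is $\binom{a}{2}\binom{a-2}{b-1}=\binom{a}{b+1}\binom{b+1}{2}$. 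One checks that, modulo $p$, the only part of the exponent $\sum_{u<u'}k_{u'}(p-k_u)$ that is not a multiple of $p$ is the contribution of the pair $(i,j)$, which is $(p-s)^2\equiv s^2$; thus $q^{\mathrm{exponent}}\equiv q^{s^2}\pmod{\qn{p}{q}}$ regardless of the positions of the blocks or the pattern on the rest. Combined with $\qbinom{p}{s}{q}\qbinom{p}{p-s}{q}=\qbinom{p}{s}{q}^{2}\equiv q^{-s(s-1)}\qn{s}{q}^{-2}\,\qn{p}{q}^{2}\pmod{\qn{p}{q}^{3}}$ --- which uses $\qbinom{p-1}{s-1}{q}\equiv(-1)^{s-1}q^{-\binom{s}{2}}\pmod{\qn{p}{q}}$, itself a consequence of $\qn{p-i}{q}\equiv-q^{-i}\qn{i}{q}\pmod{\qn{p}{q}}$ --- the two-middle summands contribute $\binom{a}{b+1}\binom{b+1}{2}\,\qn{p}{q}^{2}\sum_{s=1}^{p-1}q^{s}/\qn{s}{q}^{2}$ modulo $\qn{p}{q}^{3}$.

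It remains to evaluate $\sum_{s=1}^{p-1}q^{s}/\qn{s}{q}^{2}$ modulo $\qn{p}{q}$. Since $\qn{s}{q}=(q^{s}-1)/(q-1)$ this equals $(q-1)^{2}\sum_{s=1}^{p-1}q^{s}/(q^{s}-1)^{2}$, and modulo $\qn{p}{q}=\Phi_{p}(q)$ the numbers $q^{s}$, $1\le s\le p-1$, run over all primitive $p$-th roots of unity $\zeta^{s}$. Logarithmic differentiation of $\prod_{s=1}^{p-1}(x-\zeta^{s})=\qn{p}{x}$ at $x=1$ gives $\sum_{s}(1-\zeta^{s})^{-1}=\tfrac{p-1}{2}$ and $\sum_{s}(1-\zeta^{s})^{-2}=-\tfrac{(p-1)(p-5)}{12}$, whence $\sum_{s}\zeta^{s}/(\zeta^{s}-1)^{2}=-\sum_{s}(1-\zeta^{s})^{-1}+\sum_{s}(1-\zeta^{s})^{-2}=-\tfrac{p^2-1}{12}$. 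Therefore $\sum_{s=1}^{p-1}q^{s}/\qn{s}{q}^{2}\equiv-\tfrac{p^2-1}{12}(q-1)^{2}\pmod{\qn{p}{q}}$, and since $(q^{p}-1)^{2}=(q-1)^{2}\qn{p}{q}^{2}$ the two-middle contribution equals $-\binom{a}{b+1}\binom{b+1}{2}\tfrac{p^2-1}{12}(q^{p}-1)^{2}$ modulo $\qn{p}{q}^{3}$; adding the two groups yields \eqref{eq:qclassical}. (For $p\ge5$ we have $24\mid p^2-1$, so this correction term is an honest polynomial in $q$ and the congruence holds over $\mathbb{Z}[q]$; specializing $q=1$ recovers \eqref{eq:classical}.)

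The step I expect to be the main obstacle is the two-middle analysis: verifying that $\sum_{u<u'}k_{u'}(p-k_u)\equiv s^{2}\pmod p$ uniformly in the positions $i<j$ of the two distinguished blocks and in the $0/p$-pattern on the remaining $a-2$ blocks, and that the count of configurations collapses to $\binom{a}{b+1}\binom{b+1}{2}$. By comparison, the harmonic-type congruence for $\sum_{s}q^{s}/\qn{s}{q}^{2}$ is clean via the logarithmic derivative of $\qn{p}{x}$, and keeping only terms modulo $\qn{p}{q}^{2}$ throughout recovers Clark's congruence $\qbinom{ap}{bp}{q}\equiv\qbinom{a}{b}{q^{p^2}}\pmod{\qn{p}{q}^{2}}$.
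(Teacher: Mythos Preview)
Your argument is correct, and its overall architecture coincides with the paper's: both start from the iterated $q$-Vandermonde expansion, identify the ``all $0/p$'' summands with $\qbinom{a}{b}{q^{p^2}}$ (Clark's step), observe that the only further contribution modulo $\qn{p}{q}^3$ comes from tuples with exactly two $k_u\in\{1,\dots,p-1\}$, reduce the exponent to $d^2$ modulo $p$, and count the configurations as $\binom{a}{b+1}\binom{b+1}{2}$.

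Where you diverge is in evaluating the correction term. The paper recognizes $\sum_{d=1}^{p-1}\qbinom{p}{d}{q}\qbinom{p}{p-d}{q}q^{d^2}$ as $\qbinom{2p}{p}{q}-[2]_{q^{p^2}}$ via $q$-Vandermonde again, thereby reducing the general statement to the single case $a=2$, $b=1$; that case is then handled as a separate lemma (a $q$-Wolstenholme congruence) by expanding $\qbinom{2p}{p}{q}$ as a product and invoking the Shi--Pan $q$-harmonic congruences for $\sum 1/\qn{i}{q}$ and $\sum 1/\qn{i}{q}^2$. You instead expand $\qbinom{p}{s}{q}^2$ directly via $\qbinom{p-1}{s-1}{q}\equiv(-1)^{s-1}q^{-\binom{s}{2}}$, arrive at $\qn{p}{q}^2\sum_{s}q^{s}/\qn{s}{q}^2$, and evaluate that sum modulo $\qn{p}{q}$ by logarithmic differentiation of $\qn{p}{x}$ at $x=1$. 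Your route is more self-contained (it re-derives exactly the cyclotomic input that Shi--Pan package) and avoids the detour through $\qbinom{2p}{p}{q}$; the paper's route buys the structural reduction to the ``Wolstenholme'' case, mirroring the classical history, and isolates that case as a lemma of independent interest.
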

The congruence \eqref{eq:qclassical} and similar ones to follow are to be
understood over the ring of polynomials in $q$ with integer coefficients. We remark
that $p^2-1$ is divisible by $12$ for all primes $p\ge5$.

Observe that \eqref{eq:qclassical} is indeed a $q$-analog of
\eqref{eq:classical}: as $q\to1$ we recover \eqref{eq:classical}.

\begin{example}
  Choosing $p=13$, $a=2$, and $b=1$, we have
  \begin{align*}
    \qbinom{26}{13}{q} &= 1+q^{169} - 14 (q^{13}-1)^2
    + (1+q+\ldots+q^{12})^3 f(q)
  \end{align*}
  where $f(q) = 14-41q+41q^2-\ldots+q^{132}$ is an irreducible polynomial with
  integer coefficients. Upon setting $q=1$, we obtain $\binom{26}{13}\equiv2$
  modulo $13^3$.
\end{example}

Since our treatment very much parallels the classical case, we give a brief
history of the congruence \eqref{eq:classical} in the next section before
turning to the proof of Theorem \ref{thm:q}.

\section{A bit of history}

A classical result of Wilson states that $(n-1)! + 1$ is divisible by $n$ if
and only if $n$ is a prime number. ``In attempting to discover some analogous
expression which should be divisible by $n^2$, whenever $n$ is a prime, but
not divisible if $n$ is a composite number'', \cite{babbage}, Babbage is led to
the congruence
\begin{equation}\label{eq:babbage}
  \binom{2p-1}{p-1} \equiv 1 \modulo{p^2}
\end{equation}
for primes $p\ge3$. In 1862 Wolstenholme, \cite{wolstenholme}, discovered
\eqref{eq:babbage} to hold modulo $p^3$, ``for several cases, in testing
numerically a result of certain investigations, and after some trouble
succeeded in proving it to hold universally'' for $p\ge5$. To this end, he
proves the fractional congruences
\begin{align}
  \sum_{i=1}^{p-1} \frac{1}{i} &\equiv 0 \modulo{p^2}, \label{eq:wol1}\\
  \sum_{i=1}^{p-1} \frac{1}{i^2} &\equiv 0 \modulo{p} \label{eq:wol2}
\end{align}
for primes $p\ge5$. Using \eqref{eq:wol1} and \eqref{eq:wol2} he then extends
Babbage's congruence \eqref{eq:babbage} to hold modulo $p^3$:
\begin{equation}\label{eq:wolstenholme}
  \binom{2p-1}{p-1} \equiv 1 \modulo{p^3}
\end{equation}
for all primes $p\ge5$. Note that \eqref{eq:wolstenholme} can be rewritten as
$\binom{2p}{p} \equiv 2$ modulo $p^3$.  The further generalization of
\eqref{eq:wolstenholme} to \eqref{eq:classical}, according to
\cite{granville-bin97}, was found by Ljunggren in 1952. The case $b=1$ of
\eqref{eq:classical} was obtained by Glaisher, \cite{glaisher}, in 1900.

In fact, Wolstenholme's congruence \eqref{eq:wolstenholme} is central to the
further generalization \eqref{eq:classical}. This is just as true when
considering the $q$-analogs of these congruences as we will see here in Lemma
\ref{lem:2}.

A $q$-analog of the congruence of Babbage has been found by Clark
{\cite{clark-qbin95}} who proved that
\begin{equation}\label{eq:clark}
  \qbinom{ap}{bp}{q} \equiv \qbinom{a}{b}{q^{p^2}} \modulo{\qn{p}{q}^2} .
\end{equation}
We generalize this congruence to obtain the $q$-analog \eqref{eq:qclassical} of
Ljunggren's congruence \eqref{eq:classical}.  A result similar to
\eqref{eq:clark} has also been given by Andrews in \cite{andrews-qcong99}.

Our proof of the $q$-analog proceeds very closely to the history just outlined.
Besides the $q$-analog \eqref{eq:clark} of Babbage's congruence
\eqref{eq:babbage} we will employ $q$-analogs of Wolstenholme's harmonic
congruences \eqref{eq:wol1} and \eqref{eq:wol2} which were recently supplied by
Shi and Pan, \cite{shipan-qwolst07}:
\begin{theorem}\label{thm:shipan}
  For primes $p\ge5$,
  \begin{equation}\label{eq:qwol1}
    \sum_{i = 1}^{p - 1} \frac{1}{[i]_q} \equiv - \frac{p - 1}{2} (q
    - 1) + \frac{p^2 - 1}{24} (q - 1)^2 [p]_q \modulo{\qn{p}{q}^2}
  \end{equation}
  as well as
  \begin{equation}\label{eq:qwol2}
     \sum_{i = 1}^{p - 1} \frac{1}{[i]_q^2} \equiv - \frac{(p - 1) (p
    - 5)}{12} (q - 1)^2 \modulo{\qn{p}{q}}.
  \end{equation}
\end{theorem}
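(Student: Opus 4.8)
\medskip
\noindent\emph{Proof proposal.}

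The plan is to derive both congruences from evaluations modulo $[p]_q$, and then to promote the first one to modulus $[p]_q^2$ by a reflection argument exploiting the symmetry $i\mapsto p-i$ (this mirrors Wolstenholme's original route to \eqref{eq:wol1}--\eqref{eq:wol2}, except that, as we will see, the harmonic-sum evaluations modulo $[p]_q$ have no clean combinatorial shortcut and are best done at a root of unity). Throughout, the feature that makes the fractional expressions meaningful is that each $[i]_q$ with $1\le i\le p-1$ is coprime to the irreducible polynomial $[p]_q=\Phi_p(q)$, hence a unit modulo every power of $[p]_q$; this is the sense in which all congruences below are read. First I would record the elementary identities $[p]_q-[i]_q=q^i[p-i]_q$, $\dfrac{q^i}{[i]_q}=(q-1)+\dfrac{1}{[i]_q}$ and $\dfrac{q^i}{[i]_q^2}=(q-1)\dfrac{1}{[i]_q}+\dfrac{1}{[i]_q^2}$, all immediate from $[n]_q=(q^n-1)/(q-1)$.

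For the computations modulo $[p]_q$ I would pass to a primitive $p$-th root of unity: reduction modulo $[p]_q$ identifies $\mathbb{Z}[q]/([p]_q)$ with $\mathbb{Z}[\zeta]$, $\zeta=e^{2\pi i/p}$, and carries $\tfrac1{[i]_q}$ to $(\zeta-1)/(\zeta^i-1)$. Since $\{1,\zeta,\ldots,\zeta^{p-1}\}$ is exactly the root set of $f(x)=1+x+\cdots+x^{p-1}=\prod_{i=1}^{p-1}(x-\zeta^i)$, logarithmic differentiation gives $\sum_{i=1}^{p-1}\frac1{x-\zeta^i}=\frac{f'(x)}{f(x)}$ and $\sum_{i=1}^{p-1}\frac1{(x-\zeta^i)^2}=\frac{f'(x)^2-f(x)f''(x)}{f(x)^2}$; evaluating at $x=1$, where $f(1)=p$, $f'(1)=\binom p2$ and $f''(1)=\frac{(p-1)p(p-2)}{3}$, a short computation yields $\sum_{i=1}^{p-1}\frac1{1-\zeta^i}=\frac{p-1}{2}$ and $\sum_{i=1}^{p-1}\frac1{(1-\zeta^i)^2}=-\frac{(p-1)(p-5)}{12}$. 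Multiplying these by $\zeta-1$ and $(\zeta-1)^2$ respectively establishes \eqref{eq:qwol2} together with the coarser statement $\sum_{i=1}^{p-1}\frac1{[i]_q}\equiv-\frac{p-1}{2}(q-1)\modulo{[p]_q}$. (This is where $p\ge5$ enters: it makes $\frac{(p-1)(p-5)}{12}$ an integer and keeps $2$, $12$, $24$ coprime to $p$.) As an alternative for the first of these, one can instead sum the relation $\frac1{\zeta^i-1}+\frac1{\zeta^{-i}-1}=-1$ over $i$.

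Finally, to upgrade \eqref{eq:qwol1} to modulus $[p]_q^2$, set $S_1=\sum_{i=1}^{p-1}\frac1{[i]_q}$ and $S_2=\sum_{i=1}^{p-1}\frac1{[i]_q^2}$, reindex $S_1$ by $i\mapsto p-i$, and substitute $\frac1{[p-i]_q}=\frac{q^i}{[p]_q-[i]_q}\equiv-\frac{q^i}{[i]_q}-\frac{q^i}{[i]_q^2}[p]_q\modulo{[p]_q^2}$, using the elementary identities above to re-express the two resulting sums through $S_1$ and $S_2$. Inserting the already-established values of $S_1$ and $S_2$ modulo $[p]_q$ into the two terms that carry a factor $[p]_q$, one is left with $2S_1\equiv-(p-1)(q-1)+\frac{p^2-1}{12}(q-1)^2[p]_q\modulo{[p]_q^2}$, and dividing by $2$ gives \eqref{eq:qwol1} exactly. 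The step I expect to demand the most care is precisely this reflection/bootstrap: it couples the two unknowns $S_1,S_2$, and it closes only because the coefficients of $[p]_q$ recombine through $\frac{p-1}{2}+\frac{(p-1)(p-5)}{12}=\frac{p^2-1}{12}$ into exactly the constant appearing in Theorem~\ref{thm:shipan}; the other nontrivial ingredient, the root-of-unity evaluation of $\sum_{i=1}^{p-1}\frac1{(1-\zeta^i)^2}$, is short once the partial-fraction setup is in place.
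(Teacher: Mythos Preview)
The paper does not actually prove Theorem~\ref{thm:shipan}; it quotes the result from Shi and Pan \cite{shipan-qwolst07} and then uses it in the proof of Lemma~\ref{lem:2}. So there is no ``paper's own proof'' to compare against.

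That said, your proposal is correct and essentially self-contained. The root-of-unity computation via logarithmic differentiation of $f(x)=\prod_{i=1}^{p-1}(x-\zeta^i)$ cleanly yields the values $\sum (1-\zeta^i)^{-1}=\tfrac{p-1}{2}$ and $\sum (1-\zeta^i)^{-2}=-\tfrac{(p-1)(p-5)}{12}$, which after multiplication by the appropriate power of $(q-1)$ give \eqref{eq:qwol2} and the mod~$[p]_q$ version of \eqref{eq:qwol1}. The reflection step is also right: from $[p-i]_q^{-1}=q^i([p]_q-[i]_q)^{-1}\equiv -q^i[i]_q^{-1}-q^i[i]_q^{-2}[p]_q$ modulo $[p]_q^2$ and the identities $q^i/[i]_q=(q-1)+1/[i]_q$, $q^i/[i]_q^2=(q-1)/[i]_q+1/[i]_q^2$, one gets
\[
2S_1\equiv -(p-1)(q-1)-[p]_q\bigl((q-1)S_1+S_2\bigr)\pmod{[p]_q^2},
\]
and substituting the mod~$[p]_q$ values into the bracket produces the constant $\tfrac{p^2-1}{12}$ via $\tfrac{p-1}{2}+\tfrac{(p-1)(p-5)}{12}=\tfrac{(p-1)(p+1)}{12}$, exactly as you anticipated. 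One small point worth making explicit in a write-up is why the final division by~$2$ is legitimate as a congruence over $\mathbb{Z}[q]$: since $[p]_q(1)=p$ is odd, the constant $2$ is coprime to $[p]_q$ in $\mathbb{Z}[q]$, hence a unit modulo $[p]_q^2$; alternatively one can note that both sides already have integer coefficients. For comparison, the original argument of Shi and Pan proceeds somewhat differently, reducing the sums modulo $[p]_q$ by pairing $i$ with $p-i$ directly (rather than through logarithmic differentiation) and handling the lift to $[p]_q^2$ in a similar reflection spirit; your partial-fraction approach is arguably more systematic and would extend more readily to higher power sums.
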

This generalizes an earlier result {\cite{andrews-qcong99}} of Andrews.

\section{A $q$-analog of Ljunggren's congruence}

In the classical case, the typical proof of Ljunggren's congruence
\eqref{eq:classical} starts with the Chu-Vandermonde identity which has the
following well-known $q$-analog:

\begin{theorem}\label{thm:qchu}
  \begin{equation*}
    \qbinom{m+n}{k}{q} = \sum_j \qbinom{m}{j}{q} \qbinom{n}{k-j}{q} q^{j(n-k+j)}.
  \end{equation*}
\end{theorem}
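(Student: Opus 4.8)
The plan is to imitate the standard derivation of the ordinary Chu--Vandermonde identity, which extracts the coefficient of $t^k$ from the factorization $(1+t)^{m+n}=(1+t)^m(1+t)^n$. The $q$-analog of the relevant generating function is the finite $q$-binomial theorem
\[
  \prod_{i=0}^{N-1}(1+q^i t)=\sum_{k\ge0}q^{\binom{k}{2}}\qbinom{N}{k}{q}t^k ,
\]
which is well known (see \cite{stanley-ec1}); if a self-contained account is wanted, it follows by a one-line induction on $N$ from the $q$-Pascal recurrence $\qbinom{N}{k}{q}=\qbinom{N-1}{k-1}{q}+q^k\qbinom{N-1}{k}{q}$.

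First I would split the $(m+n)$-fold product as
\[
  \prod_{i=0}^{m+n-1}(1+q^i t)=\left(\prod_{i=0}^{n-1}(1+q^i t)\right)\left(\prod_{i=0}^{m-1}(1+q^{n+i}t)\right),
\]
and observe that the second bracket, after the substitution $s=q^n t$, is $\prod_{i=0}^{m-1}(1+q^i s)$ and hence equals $\sum_j q^{\binom{j}{2}}q^{nj}\qbinom{m}{j}{q}t^j$ by the $q$-binomial theorem. Expanding the first bracket as $\sum_i q^{\binom{i}{2}}\qbinom{n}{i}{q}t^i$ and the left-hand side as $\sum_k q^{\binom{k}{2}}\qbinom{m+n}{k}{q}t^k$, I would then compare coefficients of $t^k$, obtaining
\[
  q^{\binom{k}{2}}\qbinom{m+n}{k}{q}=\sum_j q^{\binom{k-j}{2}+\binom{j}{2}+nj}\qbinom{n}{k-j}{q}\qbinom{m}{j}{q} .
\]
Dividing by $q^{\binom{k}{2}}$ and simplifying the exponent via the elementary identity $\binom{k-j}{2}+\binom{j}{2}+nj-\binom{k}{2}=j(n-k+j)$ then yields the asserted formula.

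The argument is routine, so the obstacle is minor: the only step demanding a little care is the bookkeeping of the power $q^n$. Attaching the shift to the $\qbinom{m}{j}{q}$ factor is precisely what makes the exponent come out as $q^{j(n-k+j)}$; the complementary factorization $\prod_{i=0}^{m-1}(1+q^i t)\cdot\prod_{i=0}^{n-1}(1+q^{m+i}t)$ would instead give the equivalent form with $q^{(m-j)(k-j)}$, the two being interchanged by $m\leftrightarrow n$ together with $j\mapsto k-j$. A fully elementary alternative avoiding the $q$-binomial theorem is induction on $n$: write $\qbinom{m+n}{k}{q}=\qbinom{m+n-1}{k-1}{q}+q^k\qbinom{m+n-1}{k}{q}$, apply the inductive hypothesis to each summand, and recombine using $q$-Pascal in the form $\qbinom{n}{k-j}{q}=\qbinom{n-1}{k-j-1}{q}+q^{k-j}\qbinom{n-1}{k-j}{q}$; the exponents $j(n-k+j)$ match term by term, and the base case $n=0$ is immediate since $\qbinom{0}{k-j}{q}$ vanishes unless $j=k$.
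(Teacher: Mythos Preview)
Your argument is correct. The generating-function derivation via the finite $q$-binomial theorem is the standard one, and your bookkeeping of the exponent is accurate: the identity $\binom{k-j}{2}+\binom{j}{2}+nj-\binom{k}{2}=j(n-k+j)$ holds as you claim. The inductive alternative you sketch is also sound.

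As for comparison with the paper: there is nothing to compare. The paper does not prove Theorem~\ref{thm:qchu}; it simply quotes the $q$-Chu--Vandermonde identity as ``well-known'' and uses it as input for the proof of Theorem~\ref{thm:q}. So your write-up supplies a proof where the paper offers none, and the approach you give is exactly the textbook one a reader would find in, e.g., \cite{stanley-ec1}.
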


We are now in a position to prove the $q$-analog of \eqref{eq:classical}.

\begin{proof}[of Theorem \ref{thm:q}]
  As in {\cite{clark-qbin95}} we start with the identity
  \begin{equation}
    \qbinom{ap}{bp}{q} = \sum_{c_1 + \ldots + c_a = b p}
    \qbinom{p}{c_1}{q} \qbinom{p}{c_2}{q} \cdots \qbinom{p}{c_a}{q}
    q^{^{p \sum_{1 \leqslant i \leqslant a} (i - 1) c_i
    - \sum_{1 \leqslant i < j \leqslant a} c_i c_j}}
    \label{eq-qchu}
  \end{equation}
  which follows inductively from the $q$-analog of the Chu-Vandermonde identity
  given in Theorem \ref{thm:qchu}.  The summands which are not divisible by
  $[p]_q^2$ correspond to the $c_i$ taking only the values $0$ and $p$. Since
  each such summand is determined by the indices $1 \le j_1 < j_2 < \ldots <
  j_b \le a$ for which $c_i = p$, the total contribution of these terms is
  \[ \sum_{1 \leqslant j_1 < \ldots < j_b \leqslant a} q^{p^2 \sum_{k = 1}^b
     (j_k - 1) - p^2 \binom{b}{2}} = \sum_{0 \leqslant i_1
     \leqslant \ldots \leqslant i_b \leqslant a - b} q^{p^2 \sum_{k = 1}^b i_k}
     = \qbinom{a}{b}{q^{p^2}} . \]
  This completes the proof of \eqref{eq:clark} given in {\cite{clark-qbin95}}.
  
  To obtain \eqref{eq:qclassical} we now consider those summands in
  (\ref{eq-qchu}) which are divisible by $[p]_q^2$ but not divisible by
  $[p]_q^3$. These correspond to all but two of the $c_i$ taking values $0$ or
  $p$. More precisely, such a summand is determined by indices $1 \leqslant
  j_1 < j_2 < \ldots < j_b < j_{b + 1} \leqslant a$, two subindices $1
  \leqslant k < \ell \leqslant b + 1$, and $1 \leqslant d \leqslant p - 1$
  such that
  \[ c_i = \left\{ \begin{array}{l}
       d \text{ for $i = j_k$},\\
       p - d \text{ for $i = j_{\ell}$},\\
       p \text{ for $i \in \{j_1, \ldots, j_{b + 1} \}\backslash\{j_k,
       j_{\ell} \}$},\\
       0 \text{ for $i \nin \{j_1, \ldots, j_{b + 1} \}$} .
     \end{array} \right. \]
  For each fixed choice of the $j_i$ and $k, \ell$ the contribution of the
  corresponding summands is
  \[ \sum_{d = 1}^{p - 1} \qbinom{p}{d}{q} \qbinom{p}{p-d}{q}
      q^{p \sum_{1 \leqslant i \leqslant a} (i - 1) c_i
      - \sum_{1 \leqslant i < j \leqslant a} c_i c_j} \]
  which, using that $q^p \equiv 1$ modulo $\qn{p}{q}$, reduces modulo $\qn{p}{q}^3$ to
  \[ \sum_{d = 1}^{p - 1} \qbinom{p}{d}{q} \qbinom{p}{p-d}{q} q^{d^2}
      = \qbinom{2p}{p}{q} - [2]_{q^{p^2}} . \]
  We conclude that
  \begin{equation}\label{eq:cong2}
    \qbinom{ap}{bp}{q} \equiv \qbinom{a}{b}{q^{p^2}} + \binom{a}{b + 1}
      \binom{b + 1}{2} \left( \qbinom{2p}{p}{q} - [2]_{q^{p^2}} \right)
      \modulo{\qn{p}{q}^3}.
  \end{equation}
  The general result therefore follows from the special case $a = 2$, $b = 1$
  which is separately proved next.
\end{proof}

\section{A $q$-analog of Wolstenholme's congruence}

We have thus shown that, as in the classical case, the congruence
\eqref{eq:qclassical} can be reduced, via \eqref{eq:cong2}, to the case $a=2$,
$b=1$. The next result therefore is a $q$-analog of Wolstenholme's congruence
\eqref{eq:wolstenholme}.

\begin{lemma}\label{lem:2}
  For primes $p\ge5$,
  \[ \qbinom{2p}{p}{q} \equiv [2]_{q^{p^2}} - \frac{p^2 - 1}{12} (q^p -
    1)^2 \modulo{\qn{p}{q}^3} . \]
\end{lemma}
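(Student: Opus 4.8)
The plan is to transcribe the classical proof of Wolstenholme's congruence \eqref{eq:wolstenholme} into the $q$-world, replacing reciprocals of integers by reciprocals of the $[i]_q$. I would begin with the factorization
\[ \qbinom{2p}{p}{q} = \prod_{i=1}^{p} \frac{[p+i]_q}{[i]_q} \]
and use the elementary identity $[p+i]_q = [p]_q + q^p [i]_q$ to write each factor as $q^p + [p]_q/[i]_q$; the factor for $i=p$ is then exactly $1+q^p$. Expanding the product over $i = 1, \ldots, p-1$ in elementary symmetric functions of $1/[1]_q, \ldots, 1/[p-1]_q$ and dropping every term divisible by $[p]_q^3$ leaves
\[ \qbinom{2p}{p}{q} \equiv (1+q^p)\left( q^{p(p-1)} + q^{p(p-2)} [p]_q S_1 + q^{p(p-3)} [p]_q^2 S_2 \right) \modulo{[p]_q^3}, \]
where $S_1 = \sum_{i=1}^{p-1} 1/[i]_q$ and $S_2 = \sum_{1 \le i < j \le p-1} 1/([i]_q [j]_q)$. (Each $[i]_q$ with $1 \le i \le p-1$ is coprime to $[p]_q$, so these fractional congruences make sense, exactly as in the Shi--Pan congruences.)

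Next I would feed in Theorem \ref{thm:shipan}: \eqref{eq:qwol1} gives $S_1$ modulo $[p]_q^2$ directly, and together with \eqref{eq:qwol2} via the identity $2 S_2 = S_1^2 - \sum_{i=1}^{p-1} 1/[i]_q^2$ it yields, after a short computation, $S_2 \equiv \tfrac{(p-1)(p-2)}{6} (q-1)^2$ modulo $[p]_q$ --- which is precisely the accuracy needed, since $S_2$ occurs only against a factor $[p]_q^2$. The rest is bookkeeping. Put $t := q^p - 1 = (q-1)[p]_q$, so that $q^{pm} = (1+t)^m$ and $t^3 = (q-1)^3 [p]_q^3 \equiv 0$ modulo $[p]_q^3$; hence each power $q^{pm}$ need only be expanded to order $t^2$, and each of $[p]_q S_1$, $[p]_q^2 S_2$ becomes an honest polynomial in $t$ through $(q-1)[p]_q = t$. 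Collecting the $t^0$, $t^1$, $t^2$ parts and then multiplying by $1 + q^p = 2 + t$ gives
\[ \qbinom{2p}{p}{q} \equiv 2 + p t + \tfrac{(p-1)(5p-1)}{12} t^2 \modulo{[p]_q^3}. \]
On the other hand $q^{p^2} = (1+t)^p \equiv 1 + p t + \binom{p}{2} t^2$ modulo $[p]_q^3$, so
\[ [2]_{q^{p^2}} - \tfrac{p^2-1}{12} (q^p - 1)^2 \equiv 2 + p t + \Bigl( \tfrac{p(p-1)}{2} - \tfrac{p^2-1}{12} \Bigr) t^2 \modulo{[p]_q^3}, \]
and one checks $\tfrac{p(p-1)}{2} - \tfrac{p^2-1}{12} = \tfrac{(p-1)(5p-1)}{12}$, so the two sides agree. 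The hypothesis $p \ge 5$ is used exactly where Theorem \ref{thm:shipan} needs it, and it guarantees the integrality of the constants that appear (e.g.\ $\tfrac{p^2-1}{12}$ and $\tfrac{p^2-1}{24}$).

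I expect the only real difficulty to be the truncation bookkeeping in the last step: one must keep track of exactly how far to expand each factor --- $q^{p(p-1)}$ to order $t^2$, $S_1$ to order $[p]_q^2$, $S_2$ only to order $[p]_q$, and the powers $q^{p(p-2)}$, $q^{p(p-3)}$ to the complementary orders --- so that nothing nonzero modulo $[p]_q^3$ is discarded and no term is counted twice, and then trust that the binomial coefficients that accumulate collapse to the single clean coefficient $-\tfrac{p^2-1}{12}$. Aside from that, the argument is a faithful copy of the classical derivation of \eqref{eq:wolstenholme} from \eqref{eq:wol1}--\eqref{eq:wol2}.
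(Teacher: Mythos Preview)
Your proposal is correct and follows essentially the same route as the paper: the paper also writes $\qbinom{2p}{p}{q}$ as $[2]_{q^p}$ times the product $\prod_{k=1}^{p-1}([p]_q+q^p[p-k]_q)/[p-1]_q!$, truncates the elementary-symmetric expansion at the $S_2$ term, invokes Theorem~\ref{thm:shipan} to obtain $S_2\equiv\frac{(p-1)(p-2)}{6}(q-1)^2$ modulo $[p]_q$, and then performs exactly your $t=q^p-1$ bookkeeping to reach $2+pt+\frac{(p-1)(5p-1)}{12}t^2$ and compare with $[2]_{q^{p^2}}$. The only cosmetic difference is that the paper factors out $[2]_{q^p}$ at the outset rather than isolating the $i=p$ factor, and it states the $S_2$ congruence directly rather than deriving it via $2S_2=S_1^2-\sum 1/[i]_q^2$.
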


\begin{proof}
  Using that $\qn{a n}{q} = \qn{a}{q^n} \qn{n}{q}$ and $\qn{n + m}{q} =
  \qn{n}{q} + q^n \qn{m}{q}$ we compute
  \[ \qbinom{2p}{p}{q} = \frac{\qn{2 p}{q} \qn{2 p - 1}{q} \cdots \qn{p + 1}{q}}
     {\qn{p}{q} \qn{p - 1}{q} \cdots \qn{1}{q}} = \frac{\qn{2}{q^p}}{\qn{p - 1}{q} !} \prod_{k = 1}^{p
     - 1} \left( \qn{p}{q} + q^p \qn{p - k}{q} \right) \]
  which modulo $\qn{p}{q}^3$ reduces to (note that $\qn{p-1}{q} !$ is relatively
  prime to $\qn{p}{q}^3$)
  \begin{equation}
    \qn{2}{q^p}  \left( q^{(p - 1) p} + q^{(p - 2) p} \sum_{1 \leqslant i
    \leqslant p - 1} \frac{\qn{p}{q}}{\qn{i}{q}} + q^{(p - 3) p} \sum_{1
    \leqslant i < j \leqslant p - 1} \frac{\qn{p}{q} \qn{p}{q}}{\qn{i}{q} \qn{j}{q}}
    \right) . \label{eq-2ppharmonic}
  \end{equation}
  Combining the results \eqref{eq:qwol1} and \eqref{eq:qwol2} of Shi and Pan,
  \cite{shipan-qwolst07}, given in Theorem \ref{thm:shipan}, we deduce that for
  primes $p\ge5$,
  \begin{equation}
    \sum_{1 \leqslant i < j \leqslant p - 1} \frac{1}{\qn{i}{q} \qn{j}{q}} \bignone
    \equiv \frac{(p - 1) (p - 2)}{6} (q - 1)^2 \modulo{\qn{p}{q}} .
  \end{equation}
  Together with \eqref{eq:qwol1} this allows us to rewrite
  (\ref{eq-2ppharmonic}) modulo $\qn{p}{q}^3$ as
  \begin{align*}
    && \qn{2}{q^p}  \left( q^{(p - 1) p} + q^{(p - 2) p} \left( - \frac{p -
    1}{2} (q^p - 1) + \frac{p^2 - 1}{24} (q^p - 1)^2 \right) + \right.\\
    && \left. + q^{(p - 3) p} \frac{(p - 1) (p - 2)}{6} (q^p - 1)^2 \right).
  \end{align*}
  Using the binomial expansion
  \[ q^{m p} = ((q^p - 1) + 1)^m = \sum_k \binom{m}{k} (q^p - 1)^k \]
  to reduce the terms $q^{m p}$ as well as $\qn{2}{q^p} = 1 + q^p$ modulo the
  appropriate power of $\qn{p}{q}$ we obtain
  \[ \qbinom{2p}{p}{q} \equiv 2 + p (q^p - 1) + \frac{(p - 1) (5 p -
     1)}{12} (q^p - 1)^2 \modulo{\qn{p}{q}^3} . \]
  Since
  \[ [2]_{q^{p^2}} \equiv 2 + p (q^p - 1) + \frac{(p - 1) p}{2} (q^p - 1)^2
     \modulo{\qn{p}{q}^3} \]
  the result follows.
\end{proof}

\begin{remark}
  \label{rk-jacobsthal}Jacobsthal, see {\cite{granville-bin97}}, generalized
  the congruence (\ref{eq:classical}) to hold modulo $p^{3 + r}$ where $r$ is
  the $p$-adic valuation of
  \[ a b (a - b) \binom{a}{b} = 2 a \binom{a}{b + 1} \binom{b + 1}{2} . \]
  It would be interesting to see if this generalization has a nice analog in
  the $q$-world.
\end{remark}

\acknowledgements
Most parts of this paper have been written during a visit of the author at
Grinnell College. The author wishes to thank Marc Chamberland for his
encouraging and helpful support. Partial support of grant NSF-DMS 0713836 is
also thankfully acknowledged.

\end{document}